\documentclass{amsart}

\usepackage[hidelinks]{hyperref}
\usepackage{fontspec}
\usepackage{unicode-math}
\usepackage{minted}
\usepackage{titlesec}

\titleformat{\section}
  {\Large\bfseries}
  {\thesection}
  {1em}
  {}

\setminted{
  fontsize=\footnotesize,
  breaklines,
  autogobble,
}

\newtheorem{lemma}{Lemma}
\newtheorem{theorem}[lemma]{Theorem}
\newtheorem{corollary}[lemma]{Corollary}

\begin{document}

	\title[Gram higher derivatives]{Higher derivatives of the Gram function}
\author[M. Dundulis]{Marius Dundulis}	
\address[Marius Dundulis]{Euromonitor International, Jogailos g. 4, LT-01116 Vilnius, Lithuania }
\email{marius.dundulis@euromonitor.com}

\author[R. Garunkštis]{Ramūnas Garunkštis}
        \address[Ramūnas Garunkštis]{Institute of Mathematics\\
	   Faculty of Mathematics and Informatics\\
	   Vilnius University\\
	   Naugarduko 24, 03225 Vilnius, Lithuania}
	   \email{ramunas.garunkstis@mif.vu.lt}
	   \urladdr{https://mif.vu.lt/~garunkstis/}
    
	\author[A. Laurinčikas]{Antanas Laurinčikas}
        \address[Antanas Laurinčikas]{Institute of Mathematics\\
        Faculty of Mathematics and Informatics\\
        Vilnius University\\
        Naugarduko 24, 03225 Vilnius, Lithuania}
        \email{antanas.laurincikas@mif.vu.lt}
 
	\author[R. Šimėnas]{Raivydas Šimėnas}
        \address[Raivydas Šimėnas]{Institute of Mathematics\\
		  Faculty of Mathematics and Informatics\\
		Vilnius University\\
		Naugarduko 24, 03225 Vilnius\\
		Lithuania}
	\email{raivydas.simenas@mif.vu.lt}
	\urladdr{https://web.vu.lt/mif/r.simenas/}
	
	\date{August 27, 2026}

    \subjclass[2020]{11M99}

    \keywords{Gram function}
	
	\begin{abstract}
		We study the nonlinear phenomenon of the asymptotics of higher-order derivatives of the Gram function $t_u$. We also prove the uniform distribution and continuous uniform distribution modulo one of $t^n_u$ for $n = 1, 2, \dots$. We use AI to formalize our proofs in Lean.
	\end{abstract}

    \maketitle

 \section{Introduction and main results}
	
	The Danish mathematician Gram introduced the concept of Gram points. They are points on the critical line where the Riemann-Siegel theta function $\vartheta (t)$ is a multiple of $\pi$. Using a heuristic argument, Gram \cite{Gram1903Note} reasoned that there is precisely one zero of the Riemann zeta function between two neighboring Gram points. Later on, Hutchinson \cite{Hutchinson1925RootsRiemann} found counterexamples to this conjecture. Nevertheless, the Gram points, which behave nonlinearly, remain an active area of research.

The Riemann-Siegel theta function $\vartheta(t)$ gives us an increment of the continuous branch of the argument of $\pi^{-s/2} \Gamma(s/2)$ as $s$ goes along the straight line segment of the complex plane joining $s=1/2$ and $s = 1/2 + it$; note that $\vartheta(t)$ is increasing for $t\ge 7$ (Ivi\'c \cite[Section 1.3]{Ivic2013} and Korolev \cite[p. 3]{korolev2016gram}). The function $\vartheta$ appears in the symmetric functional equation involving the Riemann zeta function $\zeta$
	\begin{displaymath}
		e^{i\vartheta(t)} \zeta(1/2 + it) = e^{-i\vartheta(t)} \zeta(1/2 - it).
	\end{displaymath}

The function $\vartheta$ is related to the function $S$.
	The function $S(t)$ is defined as
	\begin{displaymath}
		S(t) = \frac{1}{\pi} \arg \zeta(1/2 + it).
	\end{displaymath}
	Here, the argument is taken along the continuous branch of $\arg \zeta(s)$ as we move from $s = 2$ to $s = 2 + it$ and $s = 1/2 + it$. If $s$ is a zero of $\zeta$, then we set
	\begin{displaymath}
		S(t) = \lim_{\delta \to +0} \frac{1}{2}(S(t + \delta) + S(t - \delta)).
	\end{displaymath}

The Riemann-von Mangoldt formula (see, e.g. Edwards \cite[Section 6.6]{Edwards2001}, Ivi\'c \cite[formula (1.45)]{Ivic2013}, Karatsuba and Korolev \cite{karatsuba2005argument}) leads to the following relation between the functions $\vartheta$ and $S$
	\begin{equation}
	\label{eq:thetasrelation}
	    N(t) = \frac{1}{\pi} \vartheta(t) + 1 + S(t).
	\end{equation}
	Here $N(t)$ is the number of $\zeta$ zeros in the rectangle $0 < \Im s \leq t$, $0 \leq \Re s \leq 1$, with the exception that at points of discontinuity, which coincide with the ordinates of the zeros $\gamma_n$, it is defined as the limit
	\begin{displaymath}
	    N(t) = \frac{1}{2} \lim_{h \to 0} (N(t + h) + N(t - h)).
	\end{displaymath}
	The function $S$ is related to the function $N$ by the Riemann-von Mangoldt formula.
	\begin{displaymath}
		N(t) = \frac{t}{2\pi} \log \frac{t}{2\pi} - \frac{t}{2\pi} + \frac{7}{8} + S (t) + \frac{1}{\pi} \delta (t)
    \end{displaymath}
	(see Karatsuba and Korolev, \cite[Theorem 1]{karatsuba2005argument}).
	
	From the above equation, the function $S$ can be expressed as
	\begin{equation}\label{eq:Sfull}
		S(t) = -\frac{t}{2\pi}\log \frac{t}{2\pi} + \frac{t}{2\pi} - \frac{7}{8} - \frac{1}{\pi} \delta(t) + N(\gamma + 0)
	\end{equation}
	with $\gamma < t < \gamma'$, where $\gamma$ and $\gamma'$ are neighboring ordinates of the zeros of $\zeta$ (see Karatsuba and Korolev \cite[Proof of Theorem 2]{karatsuba2005argument}). The function $\delta$ is
	\begin{equation}\label{eq:deltadefinition}
		\delta(t) = \frac{t}{4} \log \left( 1 + \frac{1}{4t^2} \right) + \frac{1}{4}\arctan \frac{1}{2t} - \frac{t}{2} \int_0^\infty \frac{\rho(u) \, du}{(u+1/4)^2 + (t/2)^2}
	\end{equation}
	and $\rho(u) = 1/2 - \{ u \}$ (see Karatsuba and Korolev \cite[Theorem 1]{karatsuba2005argument}).

	The first derivative of $S$ is
    \begin{equation} \label{eq:sfirstder}
	    S'(t) = -\frac{1}{2\pi}\log \frac{t}{2\pi} + O(t^{-2}), \ t \to +\infty
	\end{equation}
	except for its points of discontinuity (see Karatsuba and Korolev \cite[Theorem 2]{karatsuba2005argument}).

    We give our theorem about the $n$-th derivative of the $S$ function. Note that in the theorem below and elsewhere, the implied constants depend on $n$, where $n$ denotes the order of the derivative of our functions $S$, $\vartheta$, and $t_u$.
	\begin{theorem} \label{theorem:sderivative}
		The $n$-th derivative of the $S$ function when $n \geq 2$ is
		\begin{equation}\label{eq:snthder}
			S^{(n)}(t) = (-1)^{n-1} \frac{(n-2)!}{2\pi}t^{1-n} + O(t^{-n-1}), \ t \to +\infty.
		\end{equation}
  except for its discontinuity points.
	\end{theorem}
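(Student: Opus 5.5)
Starting from the explicit formula \eqref{eq:Sfull}, valid on each open interval $\gamma<t<\gamma'$ between consecutive ordinates of zeros, the term $N(\gamma+0)$ is constant there. So for $n\ge 1$ and $t$ not a discontinuity point,
\begin{displaymath}
S^{(n)}(t) = \frac{d^n}{dt^n}\Bigl(-\frac{t}{2\pi}\log\frac{t}{2\pi}+\frac{t}{2\pi}\Bigr) - \frac{1}{\pi}\delta^{(n)}(t).
\end{displaymath}
The plan is to compute the elementary part exactly and to show that $\delta^{(n)}(t)=O(t^{-n-1})$ for every $n\ge 2$; the claimed formula then follows at once.

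For the main term, the first derivative of $-\frac{t}{2\pi}\log\frac{t}{2\pi}+\frac{t}{2\pi}$ is $-\frac{1}{2\pi}\log\frac{t}{2\pi}$, consistent with \eqref{eq:sfirstder}. Differentiating once more gives $-\frac{1}{2\pi}t^{-1}$. Then $\frac{d^{m}}{dt^{m}}t^{-1}=(-1)^m m!\,t^{-1-m}$, and taking $m=n-2$ yields exactly $(-1)^{n-1}\frac{(n-2)!}{2\pi}t^{1-n}$ for $n\ge2$.

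The main step is bounding $\delta^{(n)}$, and I would treat the three terms of \eqref{eq:deltadefinition} separately.
\begin{itemize}
\item \emph{First two terms.} They are analytic in $t$ for large real $t$, and indeed in a sector around the positive axis. Expanding in powers of $1/t$ gives $\frac t4\log(1+\frac1{4t^2})=\frac1{16t}+O(t^{-3})$ and $\frac14\arctan\frac1{2t}=\frac1{8t}+O(t^{-3})$, that is, convergent series in odd powers of $t^{-1}$ starting at $t^{-1}$. Termwise differentiation, justified by convergence or simply by Cauchy's estimates on a disc of radius $t/2$, gives derivatives $O(t^{-1-n})$.
\item \emph{Integral term.} Write $J(t)=\frac t2\int_0^\infty \frac{\rho(u)\,du}{(u+1/4)^2+(t/2)^2}$. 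I would extend $J$ to complex $t$ with $|\arg t|<\pi/4$, where the denominator stays away from zero and the integral converges absolutely and is holomorphic. Since $\rho$ has mean zero and bounded primitive $P(u)=\int_0^u\rho$, one integration by parts gives
\begin{displaymath}
J(t)=\frac t2\int_0^\infty P(u)\frac{2(u+1/4)}{((u+1/4)^2+t^2/4)^2}\,du .
\end{displaymath}
This is $O(|t|^{-1})$ uniformly on that sector: substituting $u\approx |t|v$, the integrand has size $|t|\cdot |t|\cdot |t|^{-4}$ over a range of length $|t|$. Cauchy's estimate on the disc of radius $|t|/2$ about a real $t$, which lies inside the sector, then gives $J^{(n)}(t)=O(t^{-1-n})$.
\end{itemize}

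The main obstacle is making the complex extension and the uniform $O(|t|^{-1})$ bound on $J$ rigorous. A fallback is to differentiate under the integral directly. Then $\partial_t^n$ of $\frac{t/2}{a^2+t^2/4}$ is a homogeneous rational function of degree $-1-n$ in $(a,t)$. After the same integration by parts against $P$, each derivative yields $\int_0^\infty |P(u)|\,O\bigl((u+1/4+t)^{-2-n}\bigr)\,du=O(t^{-1-n})$. Combining the three bounds gives $\delta^{(n)}(t)=O(t^{-n-1})$, and the theorem follows.
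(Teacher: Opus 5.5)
Your proposal is correct. Its skeleton matches the paper's: the decomposition via \eqref{eq:Sfull}, the exact computation of the elementary part, the Laurent expansion of the two closed-form terms of $\delta$, and integration by parts against the primitive $P$ (the paper's $\sigma$), which satisfies $0\le P\le 1/8$.

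The genuine difference is how you control the $n$-th derivative of the integral term. The paper proceeds in four steps:
\begin{itemize}
\item it differentiates $j(t)$ under the integral sign, justified by dominated convergence;
\item it expands $\partial_t^n\bigl((u+1/4)^2+t^2/4\bigr)^{-2}$ into terms $t^{2r-n}\bigl((u+1/4)^2+t^2/4\bigr)^{-r-2}$ with $\lceil n/2\rceil\le r\le n$;
\item it integrates these explicitly to get $j^{(n)}(t)\ll t^{-n-2}$;
\item it finishes with the Leibniz product rule for $-\frac t2 j(t)$.
\end{itemize}
Your primary route replaces all of that with one sector estimate plus Cauchy's inequality, which yields every derivative at once.

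The step you flag as the main obstacle takes only a line. With $a=u+1/4$ and $\Re(t^2)\ge 0$, one has $|a^2+t^2/4|^2\ge a^4+|t|^4/16\ge\frac12(a^2+|t|^2/4)^2$. Hence $|J(t)|\le\frac{|t|}{8}\bigl(\frac1{16}+\frac{|t|^2}{4}\bigr)^{-1}\le\frac{1}{2|t|}$ on the closed sector $|\arg t|\le\pi/4$. The same inequality supplies the locally uniform majorant needed for holomorphy. Cauchy's estimate on $|w-t|=t/2$ then gives $|J^{(n)}(t)|\le n!\,2^n t^{-n-1}$.

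Your fallback stays closer to the paper. There, the homogeneity observation (that $\partial_a\partial_t^n$ of $\frac{t/2}{a^2+t^2/4}$ is homogeneous of degree $-2-n$) absorbs both the paper's explicit expansion and its product-rule step.

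The paper's argument is entirely real-variable and gives explicit constants. Yours is shorter, uniform in $n$, and handles the closed-form terms and the integral term by the same Cauchy-estimate mechanism.
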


    From the above theorem, equation \eqref{eq:thetasrelation}, equation \eqref{eq:sfirstder}, and the fact that $\vartheta$ is differentiable infinitely many times, we get
    \begin{corollary}\label{corollary:thetanderivative}
	Suppose $n \geq 2$. Then, the $n$-th derivative of the Riemann-Siegel theta function is
	\begin{equation}\label{eq:thetanthder}
	    \vartheta^{(n)}(t) = (-1)^{n} \frac{(n-2)!}{2}t^{1-n} + O(t^{-n-1}), \ t \to +\infty.
	\end{equation}
	\end{corollary}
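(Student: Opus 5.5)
The plan is to differentiate the relation \eqref{eq:thetasrelation} $n$ times on an interval between consecutive ordinates of zeros, and then read off $\vartheta^{(n)}$ from Theorem~\ref{theorem:sderivative}. Fix $t$ large with $t$ not equal to any ordinate $\gamma$, and let $\gamma<t<\gamma'$ with $\gamma,\gamma'$ neighbouring ordinates.

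\emph{Step 1: $N$ is locally constant.} On the open interval $(\gamma,\gamma')$ the counting function $N$ is constant, equal to $N(\gamma+0)$. Hence all its derivatives of order $\ge 1$ vanish there.

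\emph{Step 2: differentiate \eqref{eq:thetasrelation}.} On $(\gamma,\gamma')$ we have $\vartheta(t)=\pi N(\gamma+0)-\pi-\pi S(t)$. Since $\vartheta$ is $C^\infty$, the function $S$ is $C^\infty$ on this interval as well. Differentiating $n\ge 2$ times gives
\begin{displaymath}
\vartheta^{(n)}(t)=-\pi S^{(n)}(t).
\end{displaymath}

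\emph{Step 3: insert Theorem~\ref{theorem:sderivative}.} Substituting \eqref{eq:snthder} yields
\begin{displaymath}
\vartheta^{(n)}(t)=-\pi(-1)^{n-1}\frac{(n-2)!}{2\pi}t^{1-n}+O(t^{-n-1})=(-1)^n\frac{(n-2)!}{2}t^{1-n}+O(t^{-n-1}).
\end{displaymath}
This holds for every large $t$ off the discrete set of ordinates. The implied constant comes from Theorem~\ref{theorem:sderivative}, so it depends only on $n$ and not on the interval $(\gamma,\gamma')$.

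\emph{Step 4: pass to all large $t$.} To cover the ordinates themselves, use that $\vartheta^{(n)}$ is continuous. The right-hand side main term is continuous, and the bound is uniform, so letting $t\to\gamma$ from either side extends the estimate to every large $t$. This proves \eqref{eq:thetanthder}.

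The only delicate point is the uniformity of the error term in Step 3 across the intervals between consecutive zeros. That uniformity is exactly what Theorem~\ref{theorem:sderivative} asserts, since its $O$-constant depends only on $n$. The first-derivative formula \eqref{eq:sfirstder} is not needed for $n\ge2$; it serves only as a consistency check of the $n=1$ analogue, $\vartheta'(t)=\tfrac12\log\frac{t}{2\pi}+O(t^{-2})$.
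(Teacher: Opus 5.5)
Your proposal is correct and is essentially the paper's argument: differentiate \eqref{eq:thetasrelation} on intervals where $N$ is constant to get $\vartheta^{(n)}=-\pi S^{(n)}$, then substitute Theorem~\ref{theorem:sderivative}. Your continuity step at the ordinates and your remark that \eqref{eq:sfirstder} is not needed for $n\ge 2$ are sound details that the paper leaves implicit.
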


It is known that for $t \geq 7$, the Riemann-Siegel theta function $\vartheta(t)$ is monotonically increasing. Thus for $u \geq \vartheta(7)/\pi + 1$, we can implicitly define the Gram function $t_u$ by
\begin{equation}\label{eq:gramimplicit}
		\vartheta(t_u) = (u-1)\pi,
	\end{equation}
with $t_u \geq 7$. Note that for positive integers $k$, $t_k$ correspond to Gram points (see Edwards \cite[pp. 125--126]{Edwards2001}). Elsewhere in the literature, the reader might encounter the definition $\vartheta(g_m) = m \pi$, where $g_m$ denotes the $m$-th Gram point. This numeration relates to ours by $g_{k - 1} = t_k$.

 The asymptotic expression for the Gram function is (see Lavrik \cite[Lemma 2]{lavrik1994titchmarsh})
	\begin{equation}\label{eq:gramestimate}
		t_u = \frac{2\pi u}{\log u} \left( 1 + (1 + o(1)) \frac{\log \log u}{\log u} \right), \ u \to +\infty.
\end{equation}

 Korolev \cite[Lemma 1.1]{korolev2016gram} gave the asymptotics for the first two derivatives of the Gram function. The first derivative of the Gram function is
 \begin{equation}
		\frac{d t_u}{du} = \frac{2\pi}{\log u}\left(1 + (1 + o(1))\frac{\log \log u}{\log u} \right), \label{eq:gramfirstder}
	\end{equation}
	as $u \to +\infty$. 

We calculate the $n$-th derivative of the Gram function $t_u$, which we denote by $t_u^{(n)}$.
    \begin{theorem}
        \label{theorem:gramnthderivative}
        The $n$-th derivative of the Gram function is asymptotically
        $$
            t_u^{(n)} = \frac{(-1)^{n + 1} 2 \pi (n - 2)!}{u^{n - 1} \log^2 u} \left( 1 + (2 + o(1)) \frac{\log \log u}{\log u}\right), \ u \to +\infty,
        $$
        for $n = 2, \ldots$. 
    \end{theorem}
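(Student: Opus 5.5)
The plan is to work directly from the implicit definition \eqref{eq:gramimplicit}. Differentiating $\vartheta(t_u)=(u-1)\pi$ once gives
$$
t_u'=\frac{\pi}{\vartheta'(t_u)},
$$
and differentiating again gives
$$
t_u''=-\frac{\pi\,\vartheta''(t_u)\,t_u'}{\vartheta'(t_u)^2}=-\frac{\pi^2\,\vartheta''(t_u)}{\vartheta'(t_u)^3}.
$$
From \eqref{eq:thetasrelation}, \eqref{eq:sfirstder} and Corollary~\ref{corollary:thetanderivative} we have $\vartheta'(t)=\tfrac12\log\frac{t}{2\pi}+O(t^{-2})$ and $\vartheta''(t)=\frac{1}{2t}\bigl(1+O(t^{-2})\bigr)$. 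Hence
$$
t_u''=\frac{1}{u}\,\psi(u),\qquad \psi(u):=-\frac{\pi^2 u}{2\,t_u\,\vartheta'(t_u)^3}\bigl(1+O(t_u^{-2})\bigr).
$$

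\emph{The case $n=2$.} Write $L=\log u$. By \eqref{eq:gramestimate}, $\log\frac{t_u}{2\pi}=L-\log L+o(\log L)$, so $\vartheta'(t_u)^3=\tfrac18L^3\bigl(1-(3+o(1))\frac{\log L}{L}\bigr)$. Also $t_u=\frac{2\pi u}{L}\bigl(1+(1+o(1))\frac{\log L}{L}\bigr)$. Multiplying these,
$$
t_u\,\vartheta'(t_u)^3=\frac{\pi u L^2}{4}\Bigl(1-(2+o(1))\frac{\log L}{L}\Bigr),
$$
and therefore
$$
\psi(u)=-\frac{2\pi}{L^2}\Bigl(1+(2+o(1))\frac{\log L}{L}\Bigr).
$$
This is exactly the claim for $n=2$.

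\emph{General $n\ge 3$.} Apply Leibniz's rule to $t_u^{(n)}=\frac{d^{n-2}}{du^{n-2}}\bigl(u^{-1}\psi(u)\bigr)$:
$$
t_u^{(n)}=\sum_{k=0}^{n-2}\binom{n-2}{k}(-1)^{n-2-k}(n-2-k)!\,u^{-(n-1-k)}\,\psi^{(k)}(u).
$$
The $k=0$ term is $(-1)^n(n-2)!\,u^{1-n}\psi(u)$, which gives precisely the stated main term with sign $(-1)^{n+1}$. It therefore suffices to show the slow-variation bound
$$
\psi^{(k)}(u)\ll \frac{|\psi(u)|}{u^k\log u}\qquad(k\ge1).
$$
Then every term with $k\ge1$ is $O\bigl(u^{1-n}L^{-3}\bigr)$, which is $o\bigl(u^{1-n}L^{-2}\cdot\frac{\log L}{L}\bigr)$ and is absorbed into the error.

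\emph{The slow-variation bound (main obstacle).} Regard $\psi$ as a function of $t=t_u$, using $u=\vartheta(t)/\pi+1$ and $\frac{d}{du}=\frac{\pi}{\vartheta'(t)}\frac{d}{dt}$. More precisely, set $\Psi(t)=-\pi\,\vartheta(t)\,\vartheta''(t)/\vartheta'(t)^3+\pi^2\vartheta''(t)/\vartheta'(t)^3\cdot(\text{const})$, so that $\psi(u)=u\,t_u''=\Psi(t_u)$ exactly. This sidesteps the $O(t^{-2})$ remainder, which we could not otherwise differentiate. Corollary~\ref{corollary:thetanderivative} gives $\vartheta^{(j)}(t)\ll t^{1-j}$ for $j\ge2$, together with $\vartheta'\asymp\log t$ and $\vartheta\asymp t\log t$. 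We then prove by induction on $k$ that $\bigl(\frac{\pi}{\vartheta'}\frac{d}{dt}\bigr)^k\Psi$ is a finite sum of terms of the form $\vartheta^{a_0}\prod_j\vartheta^{(j)}$ times a power of $\vartheta'$. Each $t$-derivative either lowers a power of $t$, costing a factor $t^{-1}$, or hits $\vartheta'$, costing a factor $t^{-1}/\log t$. Combined with the prefactor $1/\vartheta'\asymp 1/\log t$ and with $u\asymp t\log t$, this yields $\psi^{(k)}\ll|\psi|/(u^k\log u)$.

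The delicate point is the $k=1$ bookkeeping. We must check that the leading parts cancel to give the extra $1/\log u$ rather than merely $O(|\psi|/u^k)$. This holds because $u\,\vartheta''/\vartheta'^3\approx \frac{\vartheta}{\pi}\cdot\frac{1}{2t}\cdot\frac{8}{\log^3 t}\sim \frac{2}{\pi\log^2 t}$, whose logarithmic derivative in $u$ is $-2/(u\log t)\cdot(1+o(1))$, and that is indeed $O(1/(uL))$ relative to $\psi$. I would verify this explicitly first and then run the induction.
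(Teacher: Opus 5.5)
Your reduction is sound. The $n=2$ computation is correct, and the Leibniz expansion of $\frac{d^{n-2}}{du^{n-2}}(\psi/u)$ does reduce the theorem to the bound $\psi^{(k)}(u)\ll|\psi(u)|/(u^k\log u)$ for $k\ge1$. \textbf{That bound is the entire content of the theorem for $n\ge3$, and your argument for it does not work.}

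\emph{The bookkeeping is one $\log$ short.} Monomials in $\vartheta,\vartheta',\vartheta^{(j)}$ with only the size information $\vartheta\asymp t\log t$, $\vartheta'\asymp\log t$, $\vartheta^{(j)}\ll t^{1-j}$ give at best $\psi^{(k)}\ll|\psi|/u^k$. Differentiating $\vartheta$, or any $\vartheta^{(j)}$ with $j\ge2$, costs only a factor $t^{-1}$, so each application of $\frac{\pi}{\vartheta'}\frac{d}{dt}$ costs $\asymp1/(t\log t)\asymp1/u$. With that bound every $k\ge1$ term of your Leibniz sum is $\asymp u^{1-n}\log^{-2}u$, the same order as the main term. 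So not even the leading constant follows.

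\emph{The missing $\log$ comes from a cancellation.} In $\Psi'$ the monomials $\vartheta''/\vartheta'^2$ and $\vartheta\vartheta'''/\vartheta'^3$ are each $\asymp1/(t\log^2t)$. Only their sum $\vartheta'^{-3}(\vartheta\vartheta'')'$ is smaller, because $(\vartheta\vartheta'')'=\vartheta'\vartheta''+\vartheta\vartheta'''=\frac1{4t}+O(t^{-2})$ while each summand is $\asymp t^{-1}\log t$. Taking the logarithmic derivative of the asymptotic equivalent $2/(\pi\log^2t)$ does not prove this, since asymptotic relations cannot be differentiated.

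\emph{The cancellation does not survive the induction.} More seriously, it is lost in a monomial-by-monomial induction. At $k=2$ the derivatives of the two cancelling monomials are again individually too large by a factor $\log u$. You then need a new cancellation, $(\vartheta\vartheta'')''\ll t^{-2}$ between $\vartheta\vartheta^{(4)}$ and $2\vartheta'\vartheta'''$, and similarly at every level. So ``verify $k=1$, then run the induction'' fails.

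\emph{How to fix it.} Build the cancellation into the representation. Since $u=t_uB(t_u)/\pi$ with $B(t)=(\vartheta(t)+\pi)/t$, you have $\psi=\Phi(t_u)$ with $\Phi=-\frac{\pi}{2}\,B\,\vartheta'^{-3}\,(2t\vartheta'')$. Use $\vartheta(t)=\frac t2\log\frac t{2\pi}-\frac t2-\frac\pi8+\delta(t)$ together with $\delta^{(j)}\ll t^{-j-1}$ (from the proof of Theorem~\ref{theorem:sderivative}). Then:
\begin{itemize}
\item $B$ and $\vartheta'$ are both $\asymp\log t$, while their $j$-th derivatives are $\ll t^{-j}$ for $j\ge1$, a full factor $\log t$ better than the trivial bound $t^{-j}\log t$;
\item $2t\vartheta''=1+O(t^{-2})$, with $j$-th derivatives $\ll t^{-j-2}$.
\end{itemize}
Leibniz and Faà di Bruno give $\Phi^{(j)}\ll t^{-j}\log^{-3}t$ for $j\ge1$. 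Applying $\frac{\pi}{\vartheta'}\frac{d}{dt}$ $k$ times then yields
\[
\psi^{(k)}(u)\ll t_u^{-k}\log^{-k-3}t_u\asymp|\psi|/(u^k\log u).
\]

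\emph{Comparison with the paper.} The paper never meets this difficulty. It applies Faà di Bruno to the identity $\vartheta(t_u)=(u-1)\pi$ itself. There every term other than $\vartheta'(t_u)t_u^{(n)}$ and $\vartheta^{(n)}(t_u)(t_u')^n$ is $O(u^{1-n}\log^{-2}u)$, a full $\log$ smaller than the latter. So only order-of-magnitude bounds on $t_u^{(j)}$, $j<n$ (the induction hypothesis), are needed, and no hidden cancellation arises.
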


    Here, we corrected the constant in \cite[Lemma 1.1]{korolev2016gram} for $n = 2$.

    The proof of the following theorem of the uniform distribution modulo one of the sequence $\{t^n_k\}_{k = 1}^\infty$ is analogous to Pańkowski \cite[Proof of Theorem 1]{Pankowski2018JointL-functions}.
    \begin{theorem}
        \label{thm:gramunifdist}
        The sequence $\{t_k^n\}_{k = 1}^\infty$ is uniformly distributed modulo one for $n = 1, 2, 3, \ldots$.
    \end{theorem}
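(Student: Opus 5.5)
We apply Fejér's theorem on uniform distribution modulo one (Kuipers--Niederreiter, Theorem 2.5 of Chapter 1), which Pańkowski also uses. Let $f(x)$ be a function on $[x_0,\infty)$ that is differentiable for $x\ge x_0$. Suppose $\Delta f(k)=f(k+1)-f(k)$ tends monotonically to $0$ as $k\to\infty$, and that $k|\Delta f(k)|\to\infty$. Then the sequence $\{f(k)\}$ is uniformly distributed modulo one. We take $f(u)=t_u^n$, which is defined and smooth for $u\ge \vartheta(7)/\pi+1$. The finitely many initial terms $k$ below this threshold do not affect uniform distribution.

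By the mean value theorem, $\Delta f(k)=f'(\xi_k)$ for some $\xi_k\in(k,k+1)$, where $f'(u)=n t_u^{n-1} t_u'$. The first step is to show that $f'(u)\to 0$ and $uf'(u)\to\infty$. From \eqref{eq:gramestimate} and \eqref{eq:gramfirstder} we have
\[
f'(u)\asymp \frac{u^{n-1}}{\log^{n-1}u}\cdot\frac{1}{\log u}=\frac{u^{n-1}}{\log^n u}.
\]
For $n=1$ this gives $f'(u)\sim 2\pi/\log u\to 0$ and $uf'(u)\to\infty$. Because $\xi_k\asymp k$, these properties carry over to $\Delta f(k)$.

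For $n\ge 2$, however, $f'(u)\to\infty$, so Fejér's theorem cannot be applied directly. Instead we use the standard higher-order difference version (van der Corput / Kuipers--Niederreiter, Theorem 3.5 of Chapter 1). If $\Delta^{m}f(k)$ is eventually monotone, tends to $0$, and satisfies $k|\Delta^{m}f(k)|\to\infty$, then $\{f(k)\}$ is u.d. mod 1. Equivalently, by van der Corput's difference theorem, it suffices that the sequence $\Delta^{n-1}f(k)$ is u.d. mod 1. We take $m=n$. By the mean value theorem for differences, $\Delta^n f(k)=f^{(n)}(\eta_k)$ with $\eta_k\in(k,k+n)$, so everything reduces to the asymptotics of $f^{(n)}(u)=(t_u^n)^{(n)}$.

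The second step is to expand $(t_u^n)^{(n)}$ with the Faà di Bruno / Leibniz rule. Each term is a product $t_u^{n-j}\prod_i t_u^{(a_i)}$ with $\sum a_i=n$ and $j$ factors. By Theorem~\ref{theorem:gramnthderivative}, $t_u^{(a)}\asymp u^{1-a}\log^{-2}u$ for $a\ge 2$, while $t_u'\asymp 1/\log u$ and $t_u\asymp u/\log u$. The dominant term is $n!\,(t_u')^n$. Every term containing a derivative of order $\ge2$ is smaller by at least a factor $1/\log u$. Hence
\[
f^{(n)}(u)=n!\,(t_u')^n\bigl(1+O(1/\log u)\bigr)\sim n!\Bigl(\frac{2\pi}{\log u}\Bigr)^n .
\]
This tends to $0$ and satisfies $uf^{(n)}(u)\to\infty$.

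The main obstacle is the monotonicity hypothesis: we need $\Delta^n f(k)$ to be eventually monotone. We handle it by showing that $f^{(n+1)}(u)<0$ for large $u$, using the same expansion one order higher. The leading term is
\[
n!\, n\, (t_u')^{n-1}t_u''\sim -n!\,n(2\pi)^{n}u^{-1}\log^{-n-1}u,
\]
because $t_u''<0$ with the sign given by Theorem~\ref{theorem:gramnthderivative}. The term $t_u^{n-1}\cdots$ containing $t_u^{(n+1)}$ is of size $u^{n-1}\cdot u^{-n}\log^{-n-1}u$, which is the same order. Its constant and sign must therefore be checked carefully; it also has the sign $(-1)^{n+2}$ times a positive factor. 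So the bookkeeping must confirm that the terms of order $u^{-1}\log^{-n-1}u$ add up to a nonzero quantity of fixed sign. Once $f^{(n+1)}$ has constant sign, $f^{(n)}$ is monotone, and therefore so is $\Delta^n f(k)=\int\!\cdots\!\int f^{(n)}(k+x_1+\dots+x_n)\,dx$. If this sign computation turns out to be delicate, the fallback is to apply Fejér's theorem to the continuous function $u\mapsto f^{(n)}$ via the mean value representation, and to verify directly that the sum of leading coefficients is nonzero.
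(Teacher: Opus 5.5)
Your route is the paper's: Kuipers--Niederreiter, Chapter~1, Theorem~3.5 with $l=n$ applied to $f(u)=t_u^n$. Your derivation of $f^{(n)}(u)\sim n!(2\pi)^n\log^{-n}u$ from Fa\`a di Bruno is fine. The detour through $\Delta^n f$ is harmless but unnecessary, since the criterion is stated for $f^{(n)}$ directly. Monotonicity then comes from the sign of $f^{(n+1)}$.

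The gap is exactly the step you call the main obstacle, and you leave it open. The expression you call the leading term, $n\cdot n!\,(t_u')^{n-1}t_u''$, is not a term of the Fa\`a di Bruno expansion of $f^{(n+1)}$ once $n\ge 2$: the partition $m_1'=n-1$, $m_2'=1$ carries the coefficient $\binom{n+1}{2}n!$. Your expression is the derivative of the leading term $n!(t_u')^n$ of $f^{(n)}$, and an asymptotic relation cannot simply be differentiated. Every partition consisting of one part $j\in\{2,\dots,n+1\}$ and $n+1-j$ ones contributes at the same order $u^{-1}\log^{-n-1}u$, not only the two terms $j=2$ and $j=n+1$ that you mention. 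You explicitly postpone checking that these contributions add up to something nonzero. Without that check the monotonicity hypothesis is not established. Your fallback does not get around it, because Fej\'er's theorem also requires monotonicity.

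The missing computation is short. For $g(t)=t^n$ one has $g^{(n+2-j)}(t)=\frac{n!}{(j-2)!}t^{j-2}$. So by Theorem~\ref{theorem:gramnthderivative}, \eqref{eq:gramestimate} and \eqref{eq:gramfirstder}, the term belonging to the part $j$ is
\[
\binom{n+1}{j}\frac{n!}{(j-2)!}\,t_u^{j-2}(t_u')^{n+1-j}t_u^{(j)}=(-1)^{j+1}\binom{n+1}{j}\frac{n!\,(2\pi)^n}{u\log^{n+1}u}\bigl(1+o(1)\bigr).
\]
Partitions with two or more parts $\ge 2$ are smaller by a factor $1/\log u$. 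The all-ones partition vanishes because $g^{(n+1)}=0$. Since $\sum_{j=2}^{n+1}(-1)^{j+1}\binom{n+1}{j}=-n$ (from $\sum_{j=0}^{n+1}(-1)^j\binom{n+1}{j}=0$), the $o(1)$ errors are harmless. Hence $f^{(n+1)}(u)\sim -n\cdot n!\,(2\pi)^n/(u\log^{n+1}u)<0$ for large $u$, which is the asymptotic recorded in the paper.

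So the constant you wrote down is in fact correct. Your heuristic can also be made rigorous. Differentiate each lower-order term $t_u^{j-1}(t_u')^{n-j}t_u^{(j)}$ of $f^{(n)}$. Its two pieces of order $u^{-1}\log^{-n-1}u$, namely $(j-1)t_u^{j-2}(t_u')^{n-j+1}t_u^{(j)}$ and $t_u^{j-1}(t_u')^{n-j}t_u^{(j+1)}$, cancel to leading order. Either way, some such computation has to be part of the proof.

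A minor slip that you do not use: van der Corput's difference theorem needs $(\Delta_h x_k)$ u.d.\ for every $h\ge 1$. So ``it suffices that $\Delta^{n-1}f(k)$ is u.d.'' is not correct, and in any case it is not an equivalence.
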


    We have the following corollary about the continuous uniform distribution of the $n$-th power of the Gram function. Laurinčikas proved this result \cite[Lemma 5.7]{Laurincikas2024Universality} by a different method. In our case, it follows by applying the derivative criterion for uniform distribution modulo one for the sequences $\{t_{k + a}^n \}_{k = 1}^\infty$ and $\{ t_{ak}^n \}_{k = 1}^\infty$.
    \begin{corollary}\label{prop:gramcontunifdist}
        The function $t_u^n$, $n = 1, 2, 3, \ldots$, is continuously uniformly distributed modulo one.
    \end{corollary}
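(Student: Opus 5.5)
We must prove Corollary \ref{prop:gramcontunifdist}: the function $f(u)=t_u^n$ is continuously uniformly distributed modulo one. We use the standard criterion (Kuipers--Niederreiter): a measurable $f$ is c.u.d. mod one if for every $h\in\mathbb{Z}\setminus\{0\}$
$$\lim_{T\to\infty}\frac1T\int_{u_0}^{T} e^{2\pi i h f(u)}\,du=0 .$$
The plan is to reduce this integral to exponential sums over the sequences $\{t_{ak+b}^n\}_{k}$, which are uniformly distributed modulo one by the same derivative argument used for Theorem \ref{thm:gramunifdist}.

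\textbf{Step 1 (uniform distribution of the shifted and dilated sequences).} For each rational-type sampling $u=a(k+\theta)$ with fixed $a>0$ and $\theta\in[0,1)$, put $g(x)=t_{a(x+\theta)}^n$. By the chain rule, \eqref{eq:gramestimate}, \eqref{eq:gramfirstder} and Theorem \ref{theorem:gramnthderivative}, the derivatives of $g$ have the expected sizes. For $n=1$, $g'(x)\asymp a/\log x\to0$ monotonically, while $xg'(x)\asymp x/\log x\to\infty$. Fejér's theorem then gives that $\{g(k)\}$ is u.d. mod one. For $n\ge2$, $g^{(n)}(x)$ is a combination of terms $t_u^{n-j}\prod t_u^{(m_i)}$. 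The leading term is $n!\,(t_u')^n\asymp(\log x)^{-n}$, which tends to $0$ monotonically, and $x\,|g^{(n)}(x)|\to\infty$. So the higher-derivative (Fejér/van der Corput) criterion yields u.d. mod one of $\{g(k)\}$.

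This is where the precise asymptotics of Theorem \ref{theorem:gramnthderivative} are needed. They show that terms involving $t_u^{(m)}$ with $m\ge2$ are of smaller order, namely $O(u^{1-m}(\log u)^{-2})$ times powers of $t_u$, and also supply the eventual monotonicity of the relevant derivative. The proof should be uniform in $\theta$, and it is, since the implied constants do not depend on $\theta$.

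\textbf{Step 2 (passing from sums to integrals).} Write
$$\int_{u_0}^{T}e^{2\pi i h t_u^n}du=a\int_0^1\sum_{k\le T/a}e^{2\pi i h t^n_{a(k+\theta)}}\,d\theta+O(a).$$
By Step 1 and Weyl's criterion, the inner average $\frac{a}{T}\sum_{k\le T/a}$ tends to $0$ for each $\theta$, and it is bounded by $1$. Dominated convergence in $\theta$ then gives the limit $0$.

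An alternative that avoids uniformity in $\theta$ is to use the mean value theorem directly. We have $|t_u^n-t_{a k}^n|\le a\sup|(t^n)'|$ on $[ak,a(k+1)]$, and for $n=1$ this tends to $0$. So the integral over each block equals $a\,e^{2\pi i h t_{ak}}+o(a)$, and u.d. of $\{t_{ak}\}$ suffices, matching the paper's hint. For $n\ge2$, however, $(t_u^n)'\asymp t_u^{n-1}/\log u\to\infty$, so this crude approximation fails. I would therefore use the $\theta$-averaged version above, or equivalently the shifted sequences $\{t_{k+a}^n\}$ over a grid of shifts $a$.

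\textbf{Main obstacle.} The delicate part is Step 1 for $n\ge2$: checking that the dominant contribution to $g^{(n)}$ really is $n!(t_u')^n$ and that it is eventually monotone. Doing this requires Faà di Bruno bookkeeping together with Theorem \ref{theorem:gramnthderivative}, \eqref{eq:gramfirstder} and \eqref{eq:gramestimate}.
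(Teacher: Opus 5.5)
Your proposal is correct and follows essentially the paper's route. The paper cites Kuipers--Niederreiter, Chapter~1, Theorem~9.6 (if the shifted sequences $\{t^n_{k+\theta}\}_k$ are uniformly distributed modulo one for almost all $\theta\in[0,1)$, then $t_u^n$ is continuously uniformly distributed), which is what your Step~2 proves by dominated convergence, and like you it obtains those shifted sequences from the higher-derivative criterion using $d^n t_u^n/du^n\sim n!(2\pi)^n\log^{-n}u$ and the eventual constant sign of $d^{n+1}t_u^n/du^{n+1}$. Neither uniformity in $\theta$ nor the extra parameter $a$ is needed: pointwise convergence for each $\theta$, with $a=1$, already suffices for the dominated convergence step.
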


    The function $t_u$ can be applied in the universality theory of zeta functions. As an example, we will consider the Riemann zeta function. Recall that the universality of $\zeta(s)$ in the Voronin sense means that a broad class of analytic functions can be approximated by shifts $\zeta (s + iu)$, $u \in \mathbb{R}$. This approximation property of $\zeta(s)$ was discovered by Voronin \cite{Voronin1975TheoremZeta-function} and has various theoretical and practical applications. For results, see Matsumoto \cite{Matsumoto15}.

    Now we discuss the discrete universality for the Riemann zeta function using its shifts by the degrees of the Gram points $\zeta(s + it^n_k)$. Korolev and Laurinčikas have studied this problem extensively. In \cite{KorolevLaurincikas2019Gram}, they proved it for the degree $1$. They proved the result for short intervals in \cite{KorolevLaurincikas2020Gram}. The continuous version of joint universality was proved in \cite{KorolevLaurincikas2023Gram}. 
    
    By $\#A$ we denote the cardinality of the set $A$. In addition, $\zeta_{\{p : p \leq y\}}(s):= \prod_{p \leq y}(1 - 1/p^s)^{-1}$ is a truncated Euler product for the Riemann zeta function, where $p$ is prime. We assume about the second moment of the Riemann zeta function shifted by the Gram function: For all $\epsilon > 0$, we conjecture
    \begin{equation}\label{eq:assumption}
        \lim_{y \to \infty} \limsup_{N \to \infty} \sup_{s \in K} \frac{1}{N} \sum_{k = 2}^N | \zeta(s + it_k^n) - \zeta_{\{p : p \leq y\}}(s + it_k^n)|^2 < \epsilon,
    \end{equation}
     where $K$ is any compact set with connected complement in the right part of the critical strip. Pańkowski \cite[p. 192, Proposition 1]{Pankowski2018JointL-functions} incorrectly thought that this follows from Gallagher's lemma applied to Dirichlet $L$-functions with shifts $\alpha t^a(\log t)^b$ for $\alpha \neq 0$, $a > 0$, and $b$ real numbers. Sourmelidis \cite{Sourmelidis2022Weyl} partially remedied this error. The proof of universality follows from our assumption \eqref{eq:assumption} and Theorem \ref{thm:gramunifdist} using the method by Pańkowski \cite[pp. 192--194, proof of Theorem 2]{Pankowski2018JointL-functions}. We formulate our statement as follows: \emph{Suppose $K \subset \{ s \in \mathbb{C}: 1/2 < \sigma < 1 \}$ is a compact set with connected complement, $f$ is a continuous non-vanishing function on $K$, analytic inside of $K$. Then, for any $\epsilon > 0$,}
        $$
        \liminf_{N \to \infty} \frac{1}{N} \# \left\{ 2 \leq k \leq N : \max_{s \in K} |\zeta(s + it^n_k) - f(s)| < \epsilon \right\} > 0.
        $$

    In the last section of the article, we discuss the proof of our results using Lean. In doing so, we employed the AI models Claude Opus 4.7, Opus 4.8, and Fable 5. Our axiomatization is the standard one for classical mathematics.

	\section{Proofs}
	
	\begin{proof}[Proof of Theorem \ref{theorem:sderivative}]
		Our goal is to compute the $n$-th derivative of $S(t)$. Recall the formula \eqref{eq:Sfull}. We split the computation into two parts.
        
        First, let us compute
		\begin{displaymath}
			\left(-\frac{t}{2\pi} \log \frac{t}{2\pi} + \frac{t}{2\pi} - \frac{7}{8} \right)^{(n)} = \frac{d^{n - 2}}{dt^{n - 2}}\left(- \frac{1}{2\pi t} \right) = (-1)^{n - 1} \frac{(n-2)!}{2\pi t^{n-1}}.
		\end{displaymath}
		It remains to estimate the error term arising from differentiating the function $\delta$. The terms
		\begin{equation}
                \label{eq:firsttwoterms}
			\frac{t}{4}\log \left(1 + \frac{1}{4t^2} \right) + \frac{1}{4} \arctan \frac{1}{2t}
		\end{equation}
		can be expanded into
            $$
            \frac{3}{16t} + \sum_{k = 3}^\infty \frac{a_k}{t^k}
            $$
        with real coefficients $a_n$. This Laurent series is differentiable since it converges for $|t| > 1/2$ and is analytic. Differentiating the above expression $n$ times term-by-term yields
         \begin{equation}\label{eq:deltafirstterms}
         (-1)^n \frac{3n!}{16 t^{n + 1}} + O(t^{-n - 3}), \ t \to +\infty.
         \end{equation}
For our purposes, the accuracy $O(t^{-n - 1})$ is sufficient. 
				
		Continuing our computation of the $n$-th derivative of $\delta(t)$, let us denote
		\begin{align*}
			j(t) & := \int_0^\infty \frac{\rho(u) \, du}{(u + 1/4)^2 + (t/2)^2},\\
            \sigma(u) & := \int_0^u \rho(z) \, dz.
		\end{align*}
		We compute the $n$-th derivative of $j$. We follow with integration by parts
		\begin{displaymath}
			j(t) = \int_0^\infty \frac{d\sigma(u)}{(u + 1/4)^2 + (t/2)^2} = 2 \int_0^\infty \frac{\sigma(u)(u+1/4) \, du}{((u + 1/4)^2 + (t/2)^2)^2}.
		\end{displaymath}
		Note that $0 \leq \sigma(u) \leq 1/8$.
		
		We need to differentiate $j(t)$ with respect to $t$ $n$ times, which amounts to differentiating under the integral sign. We use the Leibniz rule 
        $$
            \frac{d}{d t} \int_a^\infty f(u, t) \, du = \int_a^\infty \frac{\partial}{\partial t} f(u, t) \, du.
        $$
        The following conditions must be satisfied: $f(u, t)$ and $\partial f/\partial t$ be continuous for $u \geq a$, and there exists an integrable $g(u)$ such that $|\partial f(u, t)/\partial t)| \leq g(u)$ (see Rudin \cite[pp. 180--182]{Rudin1976Analysis}). For integrating the $k$-th derivative, we need an integrable majorant uniform in $t$ on $[T, 
    \infty)$. This condition is satisfied by
        $$
            \left| \frac{\partial^k}{\partial t^k} \left( \frac{1}{((u + 1/4)^2 + (t/2)^2)^2}\right) \right| \ll_k \frac{1}{((u + 1/4)^2 + 1)^2}, \ t \geq 1
        $$
        and the bound $0 \leq \sigma \leq 1/8$.
        
		Applying the Leibniz rule yields
        \begin{align*}
			 \frac{d^n}{dt^n} & \int_0^\infty \frac{(u + 1/4) \sigma(u)\, du}{((u+1/4)^2 + (t/2)^2)^2} 
             = \int_0^\infty \frac{\partial^n}{\partial t^n} \frac{(u + 1/4) \sigma(u) \, du}{((u + 1/4)^2 + (t/2)^2)^2} \\
            & \ll \int_0^\infty\sum_{r=\lceil n/2 \rceil}^n \frac{t^{2r-n} (4u + 1) \sigma(u)\, du}{((4u + 1)^2 + 4t^2)^{r + 2}}, \ t \to +\infty.
		\end{align*}

        In evaluating the above integral, we use the bound $\sigma(u) \le 1/8$ and the substitution $v = (4u+1)^2 + 4t^2$, so that $dv = 8(4u+1) \,  du$. We get

\begin{align*}
& \int_0^\infty \frac{(4u+1)\sigma(u)du}{((4u+1)^2+4t^2)^{r+2}}
\le \frac{1}{8}\int_0^\infty \frac{(4u+1)du}{((4u+1)^2+4t^2)^{r+2}} \\ &
= \frac{1}{64}\int_{1+4t^2}^{\infty} \frac{dv}{v^{r+2}}
= \frac{1}{64(r+1)(1+4t^2)^{r+1}}.
\end{align*}

Multiplying by $t^{2r-n}$ and using $1+4t^2 \ge 4t^2$:

\begin{align*}
& \frac{t^{2r-n}}{64(r+1)(1+4t^2)^{r+1}} \le \frac{t^{2r-n}}{64(r+1)4^{r+1}t^{2r+2}} \\ & = \frac{t^{-n-2}}{64(r+1)4^{r+1}} .
\end{align*}

Summing over $\lceil n/2\rceil \le r \le n$  yields:

$$
j^{(n)}(t) \ll_n t^{-n-2}, \ t \to +\infty.
$$
         
         We need to find the $n$-th derivative of the last term of the function $\delta(t)$ (see \eqref{eq:deltadefinition})
         $$
          - \frac{t}{2} \int_0^\infty \frac{\rho(u) \, du}{(u+1/4)^2 + (t/2)^2} = -\frac{t}{2} j(t).
         $$
          We apply the Leibniz rule to differentiate products. If $u$ and $v$ are $n$ times differentiable functions of $x$, then (see Gradshteyn and Ryzhik \cite[Formula 0.42]{gradshteyn2014table})
		 \begin{displaymath}
		 	\frac{d^n(uv)}{dx^n} = u \frac{d^nv}{dx^n} + \binom{n}{1} \frac{du}{dx}\frac{dv^{n-1}}{dx^{n-1}} + \binom{n}{2}\frac{d^2 u}{dx^2}\frac{d^{n-2}v}{dx^{n-2}} + \ldots + \frac{d^n u}{dx^n}v.
		 \end{displaymath}
	   We get
		 \begin{displaymath}
		 	-\frac{1}{2} \left(t j^{(n)}(t) + \binom{n}{1} j^{(n-1)}(t) \right) = O(t^{-n-1}).
		 \end{displaymath}
		 Collecting this and \eqref{eq:deltafirstterms} completes the proof of Theorem \ref{theorem:sderivative}.
	\end{proof}

    \begin{proof}[Proof of Theorem \ref{theorem:gramnthderivative}]
        We prove this by induction. Let us make the inductive hypothesis
        \begin{equation} \label{eq:inductivestep} 
            t_u^{(k)} = \frac{(-1)^{k + 1} 2 \pi (k - 2)!}{u^{k - 1} \log^2 u} \left( 1 + (2 + o(1)) \frac{\log \log u}{\log u}\right), \ u \to +\infty.
        \end{equation}
        First, we prove that the theorem statement holds for $k = 2$. Differentiating the equation \eqref{eq:gramimplicit} gives
     $$
     \vartheta''(t_u) t^{\prime 2}_u + \vartheta'(t_u) t''_u = 0.
     $$
     We calculate the term $\vartheta''(t_u) t^{\prime 2}_u$ using Corollary \ref{corollary:thetanderivative}, equations \eqref{eq:gramfirstder} and \eqref{eq:gramestimate}
     \begin{align*}
         \vartheta''(t_u) t^{\prime 2}_u = & \left(\frac{1}{2 t_u} + O(t_u^{-3}) \right) \left(\frac{2\pi}{\log u}\left(1 + (1 + o(1))\frac{\log \log u}{\log u} \right) \right)^2 \\ = &\frac{2 \pi}{2 u \log u}\left( 1 + (1 + o(1)) \frac{\log \log u}{\log u}\right), \ u \to +\infty.
     \end{align*}

     Differentiating \eqref{eq:gramimplicit} and using the expression \eqref{eq:gramfirstder} yields
     $$
     \frac{1}{\vartheta'(t_u)} = \frac{2}{\log u} \left(1 + (1 + o(1))\frac{\log \log u}{\log u} \right), \ u \to +\infty.
     $$
     Combining the above two results gives
     $$
     t''_u = - \frac{2\pi}{u \log^2 u}\left( 1 + (2 + o(1)) \frac{\log \log u}{\log u}\right), \ u \to +\infty.
     $$

     Suppose our hypothesis \eqref{eq:inductivestep} holds for $2 \leq k \leq n - 1$. We show it holds for $k = n$. By Faà di Bruno's formula (see Gradshteyn and Ryzhik \cite[formula (0.430)]{gradshteyn2014table}), the $n$-th derivative of a composite function $h(x) = g(f(x))$ is
\begin{align}\label{bruno}
\frac{d^n}{dx^n} g(f(x)) = & \sum \frac{n!}{m_1! 1!^{m_1} m_2! 2!^{m_2} \cdots m_n! n!^{m_n}} g^{(m_1 + m_2 + \cdots +m_n)}(f(x)) \\ & \times \prod_{j = 1}^n (f^{(j)}(x))^{m_j}. \nonumber
\end{align}
Here, the summation is taken over all partitions
\begin{equation} \label{eq:partition}
 m_1 + 2 m_2 + \cdots + n m_n = n.
\end{equation}
We will need the shorthand notation $\sum m = m_1 + m_2 + \cdots + m_n$ for our purposes.
        
We plug $g(s) = \vartheta(s)$, $f(u) = t_u$ into Faà di Bruno's formula. Then, the relation \eqref{eq:gramimplicit} yields that the $d^n \vartheta(t_u)/d u^n$ equals $0$ and the formula \eqref{bruno} becomes
\begin{align}\label{brunoforgram}
   0 =  \vartheta'(t_u) t_u^{(n)} + \cdots + \vartheta^{(n)}(t_u)(t_u')^n.
\end{align}
The first member in the above equation corresponds to the case $m_n = 1$ and $m_i = 0$ for $i = 1, \ldots, n - 1$. The last member - to the case $m_1 = n$ and $m_i = 0$ for $i = 2, \ldots, n$.
        
In the summation of Faà di Bruno's formula, the first term in the above equation \eqref{brunoforgram} is the only one containing $t_u^{(n)}$, and the last one is asymptotically the largest of the remaining ones as $u \to \infty$. Let us prove this fact. We will show that $\vartheta^{(n)}(t_u)(t'_u)^n$ is asymptotically larger than the terms obtained using Faà di Bruno's formula, except possibly the first one. Hence, we can ignore the constants. Any term is uniquely determined by the values $m_1$, $m_2$, \ldots, $m_n$, which come from the partition \eqref{eq:partition}. For this calculation, we use the asymptotics for the derivatives of $\vartheta(t)$ (Corollary \ref{corollary:thetanderivative}), the asymptotics for $t'_u$ and $t_u$, and our inductive step \eqref{eq:inductivestep}. We skip the term $\vartheta'(t_u) t^{(n)}_u$. In all the remaining terms, we have $m_n = 0$. The arbitrary term in Faà di Bruno's formula, except for the partition $m_n = 1$, $m_i = 0$, $i = 1, \ldots, n - 1$, is
\begin{align}
\nonumber &\left| \vartheta^{(m_1 + m_2 + \cdots +m_{n - 1})}(t_u) \prod_{j = 1}^{n - 1} (t^{(j)}_u)^{m_j} \right| = \Theta \left( \frac{1}{t_u^{\sum m - 1}} \cdot \frac{1}{\log^{m_1} u}\prod_{j = 2}^{n - 1} \left( \frac{1}{u^{j - 1} \log^2 u} \right)^{m_j} \right)
        \\ \nonumber
            & =
\Theta \left(\frac{\log^{\sum m -1}u}{u^{\sum m - 1} \log^{m_1} u} \cdot \frac{1}{u^{\sum_{j = 2}^{n -1} j m_j - \sum m + m_1} \log^{2 \sum m - 2m_1}u}\right) \\ \label{eq:memberofsumfaadibruno}
            & = \Theta \left( \frac{1}{u^{n - 1} \log^{\sum m - m_1 + 1} u} \right).
\end{align}
We say that a function $f(x)$ is $\Theta(g(x))$ if there exist positive constants $c_1$, $c_2$ and a real number $x_0$ such that $c_1 g(x) \leq f(x) \leq c_2 g(x)$, $x \geq x_0$. In the last step above, we used the equation \eqref{eq:partition}. The last expression shows that among all the terms in Faà di Bruno's formula, except $\vartheta'(t_u)t_u^{(n)}$, the asymptotically largest one is $\vartheta^{(n)}(t_u)(t'_u)^n$, corresponding to the partition $m_1 = n$ and $m_i = 0$, $i = 2, \ldots n$. This, together with \eqref{brunoforgram}, leads to
\begin{align*}
\vartheta'(t_u) t_u^{(n)}= -\vartheta^{(n)}(t_u)(t_u')^n +o\left( \vartheta^{(n)}(t_u)(t_u')^n \right)\qquad (u\to\infty).
 \end{align*}
\begin{align*}
\vartheta'(t_u) t_u^{(n)}= -\vartheta^{(n)}(t_u)(t_u')^n +O\left( \frac{1}{u^{n - 1} \log^{2} u} \right)\qquad (u\to\infty),
\end{align*}
It follows
$$
t_u^{(n)} = \frac{-\vartheta^{(n)}(t_u)(t'_u)^n}{\vartheta'(t_u)} + O \left( \frac{1}{\vartheta'(t_u) u^{n-1} \log^2 u}\right).
$$
The dominating term of $\vartheta'(t_u)$ is asymptotically $\log u$, thus the error term in the above expression becomes
$$
    O\left( \frac{1}{u^{n-1} \log^3 u}\right).
$$
Thus Corollary \ref{corollary:thetanderivative} together with the asymptotics for $t'_u$ and $t_u$ leads to
$$
t_u^{(n)} = \frac{(-1)^{n + 1} 2 \pi (n - 2)!}{u^{n - 1} \log^2 u} \left( 1 + (2 + o(1)) \frac{\log \log u}{\log u}\right).
$$
        This completes the induction and yields the theorem.
    \end{proof}
	
	\begin{proof}[Proof of Theorem \ref{thm:gramunifdist}]
	   We have theorem \cite[Chapter 1, Theorem 3.5]{Kuipers2012UniformSequences} stating: if there exists $l \in \mathbb{N}$, the function $f(x)$ is defined for $x \geq 1$, the derivative $f^{(l)}(x)$ tends monotonically to $0$ as $x \to \infty$, and if $x|f^{(l)}(x)| \to \infty$, $x \to \infty$, then the sequence $\{ f(n) \}_{n \in \mathbb{N}}$ is uniformly distributed modulo one.

    Consider the $n$-th power of the Gram function $t^n_u$. By Faà di Bruno's formula, we have
    \begin{equation} \label{eq:tnfaadibruno}
    \frac{d^n t_u^n}{du^n} = \sum \frac{n!}{m_1!1!^{m_1}m_2!2!^{m_2} \cdots m_n!n!^{m_n}}\frac{d^{m_1 + m_2 + \cdots m_n}t_u^n}{d t_u^{m_1 + m_2 + \cdots m_n}} \prod_{j = 1}^n (t_u^{(j)})^{m_j},
    \end{equation}
    where we sum over all partitions
    \begin{equation}\label{eq:npartition}
        m_1 + 2 m_2 + \cdots + n m_n = n.
    \end{equation}
    Let us denote $\sum m := m_1 + m_2 + \cdots + m_n$. An arbitrary term on the right-hand side of the equation \eqref{eq:tnfaadibruno} depends on the value of the vector $(m_1, m_2, \ldots, m_n)$. Ignoring the constant, recalling the asymptotic of $t_u$ given in the formula \eqref{eq:gramestimate}, and the asymptotic of its derivative in Theorem \ref{theorem:gramnthderivative} yields the following asymptotic of an arbitrary term in the formula \eqref{eq:tnfaadibruno}
    \begin{align*}
    & \frac{u^{n - \sum m}}{\log^{n - \sum m} u} \cdot \frac{1}{\log^{m_1} u} \prod_{j = 2}^n \frac{1}{(u^{j-1} \log^2 u)^{m_j}} = \frac{u^{n - \sum m}}{\log^{n - \sum m} u} \cdot \frac{1}{u^{n - \sum m} \log^{-m_1 + 2 \sum m} u} \\ & = \frac{1}{\log^{n -m_1 + \sum m} u},
    \end{align*}
    where we used the formula \eqref{eq:npartition}. This value is maximized when we minimize $-m_1 + \sum m$, which happens when $m_1 = n $ and $m_2 = \cdots = m_n= 0$. Here $\sum m \leq n$ thus no term degenerates.

    To prove monotonicity, we calculate the $n + 1$-st derivative of the $n$-th power of the Gram function. Again, we use Faà di Bruno's formula. Summing over partitions, we have
    $$
    m_1' + 2m_2' + \cdots  + nm_n' + (n + 1)m_{n + 1}' = n + 1,
    $$
    where the integers $m_1'$, $m_2'$, \ldots, $m_{n + 1}'$ appear in our formula now. Let us denote $\sum m' := m_1' + m_2' + \cdots + m_{n + 1}'$. It follows
    \begin{align*}
        & \frac{u^{n - \sum m'}}{\log^{n - \sum m'} u} \cdot \frac{1}{\log^{m_1'} u} \prod_{j = 2}^{n + 1} \frac{1}{(u^{j-1} \log^2 u)^{m_j'}} = \frac{u^{n - \sum m'}}{\log^{n - \sum m'} u} \\ & \times \frac{1}{u^{n + 1- \sum m'} \log^{-m_1' + 2 \sum m'} u} \\ & = \frac{1}{u\log^{n -m_1' + \sum m'} u}
    \end{align*}
    Again, this value is maximized by setting $\sum m' - m'_1 = 1$. 
    
    All in all, we have
    $$
    \frac{d^n t_u^n}{d u^n} \sim  \frac{n!(2\pi)^n}{\log^n u}
    $$
    and
    $$
        \frac{d^{n + 1} t_u^n}{du^{n + 1}} \sim  -\frac{n n!(2\pi)^n}{u \log^{n + 1} u}.
    $$
     Based on the fact that the main terms of the $n + 1$-st derivative's asymptotic is non-zero of a fixed sign, there exists some value $u_0$ for which $d^{n + 1} t_u^n/du^{n + 1}$ does not change its sign provided $u > u_0$. Hence, the $n$-th derivative of $t_u^n$ will monotonically approach $0$ for $u > u_0$. In addition, $u |d^n t_u^n/du^n|$ will monotonically approach $+\infty$. Therefore, by the criterion stated at the beginning of the proof, the sequence $\{t^n_k\}_{k = 1}^\infty$ is uniformly distributed modulo one.
    \end{proof}

    \begin{proof}[Proof of Corollary \ref{prop:gramcontunifdist}]
        By Kuipers and Niederreiter \cite[Chapter 1, Theorem 9.6]{Kuipers2012UniformSequences}, the function $f(t)$, $t \geq 0$, is continuously uniformly distributed modulo one if 1) the sequence $f(k+t)$, $k = 0, 1, \ldots$, is uniformly distributed modulo one for almost all $t \in [0, 1)$ and 2) the sequence $f(kt)$, $k = 1, 2, \ldots$, is uniformly distributed modulo one for almost all $t \in [0, 1)$. If we take $f$ to be the $n$-th power of the Gram function, both of these facts follow from the fact that there exists real number $u_0$ such that $d^n t^n_u/du^n$ monotonically approaches zero for $u > u_0$, $u|d^n t_u^n/du^n|$ monotonically approaches $+\infty$ for $u > u_0$, and the criterion for the uniform distribution modulo $1$ of sequences, formulated at the beginning of the proof of Theorem \ref{thm:gramunifdist}.
    \end{proof}

    \section{Using AI and Lean}

    Lean has emerged as the definitive programming language for proving mathematical theorems. We use Lean v4.29.1 to formalize our results. To write the code, we employed Claude Opus 4.7, Opus 4.8, and Fable 5 LLMs. The overall amount of code is substantial, more than 10000 lines. It is available at \url{https://github.com/raivydassimenas/gram-derivatives.git}. The proofs of our theorems and corollaries are located in the folder \texttt{GramDerivatives}. Each theorem or corollary has its own file, along with additional files proving the necessary results from uniform distribution theory. The project builds cleanly, with no additional axioms apart from the standard mathematical ones and no \mintinline{lean}{sorry} statements.

    Here, for the reader's convenience, we provide the formalizations of our theorems and corollaries in Lean
    \begin{itemize}
        \item Theorem 1. While in the main part of our article we studied a specific function $S$, in Lean \mintinline{lean}{S} does not depend on the analytic nature of the piecewise constant function $N(t)$. The important fact is that $N(t)$ is constant except at a discrete set of jump points, with the function locally constant off that set. Thus, we prove the result not for one particular function but for the whole class of functions. The function class $S$ is defined as
        \begin{minted}{lean4}
noncomputable def S (F : StepFunction) (t : ℝ) : ℝ :=
  φ t - (1 / Real.pi) * δ t + F t
        \end{minted}
        with \mintinline{lean}{StepFunction} defined as
        \begin{minted}{lean4}
structure StepFunction where
  toFun : ℝ → ℝ
  jumpSet : Set ℝ
  jumpSet_discrete :
    ∀ x ∈ jumpSet, ∃ ε > 0, ∀ y ∈ jumpSet, |y - x| < ε → y = x
  locallyConstant_off :
    ∀ ⦃s : ℝ⦄, s ∉ jumpSet → toFun =ᶠ[nhds s] fun _ => toFun s
        \end{minted}
        The function \mintinline{lean}{phi} is
        \begin{minted}{lean4}
noncomputable def φ (t : ℝ) : ℝ :=
  -(t / (2 * Real.pi)) * Real.log (t / (2 * Real.pi))
  + t / (2 * Real.pi)
  - 7 / 8
        \end{minted}
        The function \mintinline{lean}{delta} is
        \begin{minted}{lean4}
noncomputable def δ (t : ℝ) : ℝ :=
  t / 4 * Real.log (1 + 1 / (4 * t ^ 2))
  + 1 / 4 * Real.arctan (1 / (2 * t))
  - t / 2 * ∫ u in Set.Ici (0 : ℝ),
        (1 / 2 - Int.fract u) / ((u + 1 / 4) ^ 2 + (t / 2) ^ 2)
        \end{minted}
        The theorem itself becomes
        \begin{minted}{lean4}
theorem theorem1 (F : StepFunction) (n : ℕ) (hn : 2 ≤ n) :
    IsO
      (fun t =>
        iteratedDeriv n (S F) t
        - ((-1 : ℝ) ^ (n - 1) * (n - 2).factorial
           * (1 / (2 * Real.pi)) * t ^ (1 - (n : ℝ))))
      (fun t => t ^ (-(n : ℝ) - 1))
      N∞₀[F.jumpSet]        
      \end{minted}
        The predicate \mintinline{lean}{IsO(f g)} holds if and only if $f = O(g)$. The function \mintinline{lean}{iteratedDeriv n f t} is $f^{(n)}(t)$. The notation \mintinline{lean}{N^\infty_0} essentially means that $f(x) = O(g(x)), x \to \infty$. 

        \item Corollary 2 is an immediate consequence of Theorem 1. We formalize it as
        \begin{minted}{lean4}
theorem corollary2 (n : ℕ) (hn : 2 ≤ n) :
    IsO
      (fun t =>
        iteratedDeriv n theta t
        - ((-1 : ℝ) ^ n * (n - 2).factorial
           * (1 / 2) * t ^ (1 - (n : ℝ))))
      (fun t => t ^ (-(n : ℝ) - 1))
      N∞ 
      \end{minted}
        Here \mintinline{lean}{theta} is defined as
        \begin{minted}{lean4}
noncomputable def theta (t : ℝ) : ℝ := δ t - Real.pi * φ t - Real.pi
        \end{minted}
        \item Theorem 3
        \begin{minted}{lean4}
theorem theorem3 (n : ℕ) (hn : 2 ≤ n) :
    Iso
      (fun u : ℝ =>
        iteratedDeriv n gram u
        - gramLeading n u
        - 2 * gramLeading n u * Real.log (Real.log u) / Real.log u)
      (fun u : ℝ => gramLeading n u * Real.log (Real.log u) / Real.log u)
      N∞
      \end{minted}
        For this theorem, we define the Gram function as the inverse of the\\Riemann-Siegel theta function $\vartheta$
        \begin{minted}{lean4}
noncomputable def gram (u : Real) : Real :=
  Function.invFunOn theta (Set.Ici 7) ((u - 1) * Real.pi)
        \end{minted}
        The function \mintinline{lean}{gramLeading} is
        \begin{minted}{lean4}
noncomputable def gramLeading (n : ℕ) (u : ℝ) : ℝ :=
  (-1 : ℝ) ^ (n + 1) * (2 * Real.pi) * (n - 2).factorial
    / (u ^ (n - 1) * Real.log u ^ 2)
        \end{minted}
        \item Theorem 4
        \begin{minted}{lean4}
theorem theorem4 (n : ℕ) (hn : 1 ≤ n) :
    Gram.UD.IsUDModOne (fun k : ℕ => gramPow n (k : ℝ))
    \end{minted}
    The function \mintinline{lean}{gramPow} is defined as
        \begin{minted}{lean4}
noncomputable def gramPow (n : ℕ) (u : ℝ) : ℝ := (gram u) ^ n
        \end{minted}
        Here, we use auxiliary results such as the van der Corput bound and the Fejer theorem from uniform distribution theory, which we prove in the files \texttt{Fejer.lean}, \texttt{UDModOne.lean}, and \texttt{VanDerCorput.lean}.

        \item Corollary 5
        \begin{minted}{lean4}
lemma corollary5 (n : ℕ) (hn : 1 ≤ n) : 
    Gram.UD.IsCUDModOne (fun u : ℝ => Gram.Theorem4.gramPow n u)         
        \end{minted}
        Here, we again use auxiliary results from uniform distribution theory, which we prove in other files.
    \end{itemize}

 \textbf{Acknowledgments.} This project has received funding from the European Social Fund (project No 09.3.3-LMT-K-712-19-0030) under a grant agreement with the Research Council of Lithuania (LMTLT).

 \textbf{Disclosure statement.} The authors declare no conflict of interest.

\bibliographystyle{plain}
\bibliography{gramderivatives}
\end{document}